\title{\Huge\bf 
Nontrivial $t$-Designs over Finite Fields\\[0.25ex] Exist for All $t$\\[2.00ex]}
\author{
Arman Fazeli\\
   \small University of California San Diego\vspace*{-1.0ex}\\
   \small 9500 Gilman Drive, La Jolla, CA\,92093\vspace*{-1.0ex}\\
   \small\it afazelic@ucsd.edu\\[4.5ex]
\and
{Shachar Lovett}\\
   \small University of California San Diego\vspace*{-1.0ex}\\
   \small 9500 Gilman Drive, La Jolla, CA\,92093\vspace*{-1.0ex}\\
   \small\it slovett@ucsd.edu\\[4.5ex]
\and
{Alexander Vardy}\\
   \small University of California San Diego\vspace*{-1.0ex}\\
   \small 9500 Gilman Drive, La Jolla, CA\,92093\vspace*{-1.0ex}\\
   \small\it avardy@ucsd.edu\\[6.5ex]
}
\date{\today\vspace{6.00ex}}
\newtheorem{theorem}{Theorem}
\newtheorem{lemma}[theorem]{Lemma}
\newcommand{\Tref}[1]{The\-o\-rem\,\ref{#1}}
\newcommand{\Lref}[1]{Lem\-ma\,\ref{#1}}
\newcommand{\Cref}[1]{Co\-ro\-lla\-ry\,\ref{#1}}
\newcommand{\be}[1]{\begin{equation}\label{#1}}
\newcommand{\ee}{\end{equation}} 
\newcommand{\eq}[1]{(\ref{#1})}
\renewcommand{\Bbb}{\mathbb}
\newcommand{\F}{{\Bbb F}}
\newcommand{\Z}{{\Bbb Z}}
\newcommand{\Q}{{\Bbb Q}}
\newcommand{\Fq}{{{\Bbb F}}_{q}}
\newcommand{\Span}[1]{{\left\langle {#1} \right\rangle}}
\DeclareRobustCommand{\sbinom}{\genfrac[]\z@{}}
\newcommand{\G}[2]{\sbinom{{#1}\kern-.05pt}{{#2}\kern-.05pt}}
\newcommand{\Gq}[2]{\sbinom{{#1}}{{#2}}_q}
\DeclareMathOperator{\GF}{GF}
\newcommand{\deff}{\mbox{$\stackrel{\rm def}{=}$}}
\renewcommand{\le}{\leqslant}
\renewcommand{\leq}{\leqslant}
\renewcommand{\ge}{\geqslant}
\renewcommand{\geq}{\geqslant}
\DeclareMathAlphabet{\mathbfsl}{OT1}{ppl}{b}{it} 
\newcommand{\eee}{\mathbfsl{e}}
\gdef\@punct{.\ \ }  
\def\@sect#1#2#3#4#5#6[#7]#8{%
  \ifnum #2>\c@secnumdepth
     \def\@svsec{}
  \else
     \refstepcounter{#1}\edef\@svsec{%
     \ifnum #2>0{{\csname the#1\endcsname}}.\fi%
    \hskip .5em}
  \fi
  \@tempskipa #5\relax
  \ifdim \@tempskipa>\z@
     \begingroup #6\relax
       \@hangfrom{\hskip #3\relax\@svsec}{\interlinepenalty \@M #8\par}
     \endgroup
     \csname #1mark\endcsname{#7}
     \addcontentsline{toc}{#1}{\ifnum #2>\c@secnumdepth\else
          \protect\numberline{\csname the#1\endcsname}\fi#7}
  \else
     \def\@svsechd{#6\hskip #3\@svsec #8\@punct\csname #1mark\endcsname{#7}
     \addcontentsline{toc}{#1}{\ifnum #2>\c@secnumdepth \else
          \protect\numberline{\csname the#1\endcsname}\fi#7}}
  \fi
  \@xsect{#5}}
\def\@ssect#1#2#3#4#5{\@tempskipa #3\relax
  \ifdim \@tempskipa>\z@
    \begingroup #4\@hangfrom{\hskip #1}{\interlinepenalty \@M #5\par}\endgroup
  \else \def\@svsechd{#4\hskip #1\relax #5\@punct}\fi
  \@xsect{#3}}
\begin{document}

\maketitle

\thispagestyle{empty}

\begin{abstract}
\noindent
A {$t$-$(n,k,\lambda)$ design over $\F_q$}
is a collection of $k$-dimensional subspaces of $\F_q^n$,
called blocks, such that each
$t$-dimensional subspace of $\F_q^n$ is contained in exactly
$\lambda$ blocks. Such $t$-designs over $\F_q$ are the $q$-analogs
of conventional combinatorial designs.
Nontrivial $t$-$(n,k,\lambda)$ designs over $\F_q$ are
currently known to exist only for $t \le 3$. Herein, we prove
that simple (meaning, without repeated blocks) nontrivial 
$t$-$(n,k,\lambda)$ designs over $\F_q$ exist for all $t$ and $q$,
provided that $k > 12t$ and $n$ is sufficiently large.
This may be regarded as a $q$-analog of the celebrated 
Teirlinck theorem for combinatorial designs.
\end{abstract}

\newpage
\vspace{3.00ex}
\section{Introduction}

\renewcommand{\baselinestretch}{1.18}\normalsize
Let $X$ be a set with $n$ elements. A \emph{$t$-$(n,k,\lambda)$
combinatorial design} (or {\emph{$t$-design}}, in brief) is 
a~collection of $k$-subsets of $X$, called blocks, such that each
$t$-subset of $X$ is contained in exactly $\lambda$ blocks. A $t$-design
is said to be \emph{simple} if there are no repeated blocks --- that is,
all the $k$-subsets in the collection are distinct.
A \emph{trivial $t$-design} is the set of all $k$-subsets of $X$.  
The celebrated~theorem of Teirlinck~\cite{Tei87} establishes the 
existence of nontrivial simple $t$-designs for all $t$.

It was suggested by Tits~\cite{Tits57} in 1957 that combinatorics 
of sets could be regarded as the limiting case $q\,{\to}\,1$ of 
combinatorics of vector spaces over the finite field $\F_q$. Indeed, 
there is a strong analogy between subsets
of a set and subspaces of a vector space, expounded by numerous
authors~\cite{Cohn,GR,Wang}. In particular, the
notion of $t$-designs has been extended to vector spaces
by Cameron~\cite{Cameron1,Cameron2} and Delsarte~\cite{Delsarte} 
in the early 1970s. Specifically, let $\F_q^n$ be a vector space 
of dimension $n$ over the finite field $\F_q$.
Then a \emph{$t$-$(n,k,\lambda)$ design over\/ $\F_q$}
is a collection of $k$-dimensional~subspaces of $\F_q^n$
($k$-subspaces, for short), called blocks, such that each
$t$-subspace of $\F_q^n$ is contained in exactly $\lambda$ blocks.
Such $t$-designs over $\F_q$ are the $q$-analogs of conventional
combinatorial designs. As for combinatorial designs, we will say
that a $t$-design over $\Fq$ is \emph{simple} if it does not
have repeated blocks, and \emph{trivial} if it is the set 
of all $k$-subspaces of $\F_q^n$.

The first examples of simple
nontrivial $t$-designs over $\F_q$ with $t \geq 2$ were
found by Thomas~\cite{T87} in 1987. Today, following the work of many
authors~\cite{BKL,MMY,Chaudhuri,S90,S92,T96,KOW13}, numerous such
examples are known. All these examples have $t=2$ or $t=3$.
If repeated blocks are allowed, nontrivial $t$-designs over
$\Fq$ exist for all $t$, as shown in~\cite{Chaudhuri}.
 However, no simple nontrivial $t$-designs
over $\F_q$ are presently known for $t>3$. 
Our main result is the following theorem.

\begin{theorem}
\label{thm:main}
Simple nontrivial $t$-$(n,k,\lambda)$ designs
over $\F_q$ exist for all $q$ and $t$, and all $k > 12(t{+}1)$
provided that $n \ge c k t$ 
for a large enough absolute constant $c$. 
Moreover, these $t$-$(n,k,\lambda)$ designs
have at most $q^{12(t+1)n}$ blocks.
\end{theorem}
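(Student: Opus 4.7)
The plan is to construct the design probabilistically, in the spirit of the random-construction framework developed by Kuperberg, Lovett and Peled for classical designs. Let $W$ be the incidence matrix of $t$-subspaces versus $k$-subspaces of $\F_q^n$; then a simple $t$-$(n,k,\lambda)$ design corresponds to a vector $x\in\{0,1\}^{\Gq{n}{k}}$ with $W x=\lambda\mathbf{1}$. I would build $x$ as $x = a + \sum_i \epsilon_i v_i$, where $a$ is a deterministic ``approximate design'' of bounded support, each $v_i\in\ker_\Z(W)$ is an integer-valued \emph{null trade} (an integer combination of $k$-subspaces that covers every $t$-subspace zero times), and the $\epsilon_i$ are independent random bits.

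A rich supply of short null trades comes from the action of $GL_n(\F_q)$ on the Grassmannian: if $A,B$ are small disjoint collections of $k$-subspaces whose incidence counts on every $t$-subspace agree, then $\mathbf{1}_A-\mathbf{1}_B$ is a null trade, and translating by $GL_n(\F_q)$ produces an orbit's worth of such trades. The representation-theoretic decomposition of the permutation module on $k$-subspaces should confirm that for $k$ large compared to $t$ such short trades exist and are plentiful; the hypotheses $k>12(t{+}1)$ and $n\ge ckt$ would serve to guarantee that there are enough of them to simultaneously control all $\Gq{n}{k}$ coordinates of the random sum.

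The main technical step, and the step I expect to be the main obstacle, is a local central limit theorem for the random vector $\sum_i \epsilon_i v_i$: one must show that with positive probability this sum equals $\lambda\mathbf{1}-Wa$ on the nose in every $t$-coordinate, while simultaneously the total $a+\sum_i \epsilon_i v_i$ takes values only in $\{0,1\}$ at each of the $\Gq{n}{k}$ blocks. Marginal concentration at the right mean is easy; the pointwise integer equality requires that the trade vectors span a sublattice of small index in $\{y\in\Z^{\Gq{n}{k}}:Wy=c\mathbf{1}\}$ and that the characteristic function of the random sum is small at every nonzero frequency. Obtaining these Fourier bounds and, using the transitive $GL_n(\F_q)$-action, making them uniform across all blocks is where the bulk of the work would lie.

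Given such a local CLT, nontriviality follows by choosing $a$ so that $0<|\mathrm{supp}(a)|<\Gq{n}{k}$, simplicity follows from the $\{0,1\}$-valuedness of $x$, and the bound of $q^{12(t+1)n}$ blocks emerges from controlling the combined support of $a$ together with the trades actually activated by the random bits.
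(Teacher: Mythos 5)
Your high-level framework is the right one --- the paper also proves this via the probabilistic method of Kuperberg, Lovett and Peled, applied to the incidence matrix of $t$-subspaces versus $k$-subspaces, with the transitive $GL_n(\F_q)$-action supplying the symmetry needed to make the local-CLT estimates uniform. However, your proposal defers exactly the step that constitutes the actual mathematical content, and defers it to an assertion that is not justified. You write that ``the representation-theoretic decomposition of the permutation module on $k$-subspaces \emph{should} confirm'' that short null trades exist and are plentiful for $k$ large relative to $t$. This is the crux: to invoke the KLP machinery one needs, for every $t$-subspace $V$, an \emph{explicit integer} combination of rows with controlled $\ell_1$-norm summing to $m\eee_V$ (the local decodability condition), together with a bound on the divisibility parameter. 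Mere existence of trades is not enough --- the quantitative thresholds $k>12(t{+}1)$, $n\ge ckt$, and the block-count bound $q^{12(t+1)n}$ all come directly from \emph{numerical bounds} on the norm of these decoding vectors and on the lattice index, and your sketch provides no mechanism for obtaining them.

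The paper fills this gap by restricting attention to $k$-subspaces inside a fixed $(t{+}k)$-dimensional ambient space $W\supset V$ and assigning each such $U$ a coefficient $f_{k,t}(\dim(U\cap V))$ depending only on the intersection dimension. The conditions that this combination equal $m\eee_V$ reduce, via a counting lemma for subspaces with prescribed pairwise intersections, to an upper-triangular $(t{+}1)\times(t{+}1)$ linear system $Df=(0,\dots,0,m)^T$ solved by Cramer's rule with $m=\det D$; bounding $|\det D|$ and $|\det D_j|$ by $q^{k(t+1)^2}$ then yields $c_3\le q^{2k(t+1)^2}$ and $c_1\le q^{k(t+1)^2+t(n-t)+n}$, which is what makes the final comparison against $|B|\ge q^{k(n-k)}$ go through. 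Without an argument of this concreteness --- or some substitute producing explicit norm bounds on your trade vectors --- your local CLT cannot be quantified, the sublattice-index claim remains unverified, and neither the condition on $k$ nor the bound on the number of blocks can be derived. As written, the proposal is a correct identification of the strategy but not a proof.
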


This theorem can be regarded as a $q$-analog of Teirlinck's 
theorem~\cite{Tei87} for combinatorial designs.
Our proof of \Tref{thm:main}
is based on a new probabilistic technique introduced by Kuperberg,
Lovett, and Peled in~\cite{KLP12} to prove the existence of certain
regular combinatorial structures. We~note that this proof technique 
is purely existential: there is no known efficient algorithm which 
can produce $t$-$(n,k,\lambda)$ design over $\Fq$ for $t > 3$.
Hence, we pose the following as an open problem:
\renewcommand{\theequation}{$\star$}
\be{openproblem}
\text{\it Design an efficient algorithm to
produce simple nontrivial $t$-$(n,k,\lambda)$ designs for large $t$}
\ee

\addtocounter{equation}{-1}
\renewcommand{\theequation}{\arabic{equation}}

The rest of this paper is organized as follows.
We begin with some preliminary definitions~in~the next section. 
We present the Kuperberg-Lovett-Peled (KLP) theorem of \cite{KLP12}
in Section\,\ref{sec:KLP}. In Section\,\ref{sec:main}, 
we apply this theorem to prove the existence of simple $t$-designs 
over $\Fq$ for all $q$ and $t$. 
Detailed proofs of some of the technical lemmas are 
deferred to Section\,\ref{sec:technical_lemmas}.

\section{Preliminaries}
\label{sec:prelim}

Let $\F_q$ denote the finite field with $q$ elements, and let $\F_q^n$ 
be a vector space of dimension $n$ over~$\F_q$. We recall some basic 
facts that relate to counting subspaces of $\F_q^n$.
The number of distinct $k$-sub\-spaces of~$\F_q^n$ is given by
the \emph{$q$-binomial (a.k.a.\ Gaussian) coefficient}
\be{Gq}
\Gq{n}{k}
\ \deff\ \
\frac{[n]_q!}{[k]_q!\,[n-k]_q!}
\ee
where $[n]_q!$ is the \emph{$q$-factorial} defined by
\be{nq!}
[n]_{q}!
\:\ \deff\ \
[1]_{q}[2]_{q} \ldots [n]_{q}
\ = \
\bigl(1+q\bigr)\bigl(1+q+q^2\bigr)\cdots\bigl(1+q+q^{2}+ \cdots +q^{n}\bigr)
\ee
Observe the similarities between \eq{Gq} and \eq{nq!}
and the conventional binomial coefficients and factorials,
respectively. Many more similarities between the combinatorics
of sets and combinatorics of vector spaces are known; see~\cite{KC},
for example. Here, all we need are upper and lower bounds on
$q$-binomial coefficients, established in the following lemma.
\begin{lemma}
\label{lemma:bounds}
$$
q^{k(n-k)}
\: \leq \
\Gq{n}{k} 
\leq \:
{n\choose k}q^{k(n-k)}
\vspace{0.75ex}
$$
\end{lemma}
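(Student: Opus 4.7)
The plan is to reduce both inequalities to a per-factor comparison using the explicit product expansion
$$\Gq{n}{k} \ =\ \prod_{i=1}^{k}\frac{q^{n-i+1}-1}{q^{i}-1},$$
which follows directly from the definitions \eq{Gq} and \eq{nq!}. The exponent $k(n-k)$ that appears in both bounds will come out of the identity $\sum_{i=1}^{k}(n-2i+1)=k(n-k)$, while the factor $\binom{n}{k}$ in the upper bound will come out of $\prod_{i=1}^{k}(n-i+1)/i=\binom{n}{k}$.

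For the lower bound, the key observation is the elementary per-factor inequality $(q^{a}-1)/(q^{b}-1)\ge q^{a-b}$ for integers $a\ge b\ge 1$, which after cross-multiplication reduces to the trivial $q^{a-b}\ge 1$. Applying this with $a=n-i+1$ and $b=i$ and multiplying over $i=1,\dots,k$ yields $\Gq{n}{k}\ge q^{k(n-k)}$.

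For the upper bound I would establish the strengthened per-factor inequality
$$\frac{q^{a}-1}{q^{b}-1}\ \le\ \frac{a}{b}\,q^{a-b}\qquad (a\ge b\ge 1).$$
Rewriting the left side as $q^{a-b}(1-q^{-a})/(1-q^{-b})$, this is equivalent to the monotonicity statement $(1-q^{-a})/a\le(1-q^{-b})/b$, i.e.\ to $f(x)=(1-q^{-x})/x$ being nonincreasing on $x>0$. A short calculus check, substituting $y=q^{-x}$, reduces this to $y\ln y\ge y-1$, which is itself the standard inequality $\ln z\ge 1-1/z$ multiplied by $y$. A fully discrete alternative is to rewrite it as $\tfrac{1}{a}\sum_{j=0}^{a-1}q^{-j}\le\tfrac{1}{b}\sum_{j=0}^{b-1}q^{-j}$, which simply says that appending smaller terms to a decreasing sequence lowers its average. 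Multiplying this per-factor inequality over $i=1,\dots,k$ with $a=n-i+1$, $b=i$ gives $\Gq{n}{k}\le\binom{n}{k}q^{k(n-k)}$.

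The only step requiring any genuine thought is the strengthened per-factor bound in the upper direction; everything else is routine bookkeeping. A more conceptual alternative, if one wishes to bypass the monotonicity lemma entirely, is to invoke the well-known expansion $\Gq{n}{k}=\sum_{\lambda}q^{|\lambda|}$ summed over Young diagrams $\lambda$ fitting inside a $k\times(n-k)$ box: there are exactly $\binom{n}{k}$ such diagrams, each contributes at most $q^{k(n-k)}$ and the full box contributes exactly $q^{k(n-k)}$, delivering both bounds simultaneously.
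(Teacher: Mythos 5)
Your main argument is correct but takes a genuinely different route from the paper's. The paper quotes the expansion $\Gq{n}{k}=\sum_{1\le s_1<\cdots<s_k\le n}q^{(s_1+\cdots+s_k)-k(k+1)/2}$ --- which is precisely the sum over Young diagrams in a $k\times(n-k)$ box that you mention in your last paragraph --- and observes that the largest of its $\binom{n}{k}$ nonnegative terms is $q^{k(n-k)}$, giving both bounds in one line. You instead telescope the product formula $\prod_{i=1}^{k}(q^{n-i+1}-1)/(q^{i}-1)$ using the per-factor estimates $q^{a-b}\le(q^{a}-1)/(q^{b}-1)\le(a/b)\,q^{a-b}$; the second of these, proved via the monotonicity of $(1-q^{-x})/x$ (or your neat averaging reformulation), is a sharper local statement than anything the paper needs and is where the real work sits. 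Both arguments are sound, with one caveat you should state: your per-factor inequalities hold only when $a\ge b$, i.e.\ $n-i+1\ge i$, so as written the product argument requires $k\le(n+1)/2$ --- for $n=4$, $k=3$ the $i=3$ factor violates both local bounds (e.g.\ $(q^2-1)/(q^3-1)<q^{-1}$ for $q=2$), even though the global inequalities still hold. This is easily repaired by the symmetry $\Gq{n}{k}=\Gq{n}{n-k}$ together with $\binom{n}{k}=\binom{n}{n-k}$ and $k(n-k)=(n-k)k$, and it is moot for the paper's application where $k$ is far smaller than $n$; your closing Young-diagram alternative is exactly the paper's proof and needs no such restriction.
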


\begin{proof}
We use the following identity from~\cite[p.\,19]{KC},
\be{KC}
\Gq{n}{k}=\sum_{1\leq s_1 <s_2< \cdots <s_k\leq n} q^{(s_1+s_2+ \ldots +s_k)-{k(k+1)/2}}
\ee
The largest term in the sum of \eq{KC} is $q^{k(n-k)}$, which
corresponds to $s_i=n-k+i$ for all $i$. The number of terms 
in the sum is ${n \choose k}$, and the lemma follows.
\end{proof}

\vspace{3ex}
\section{The KLP theorem}
\label{sec:KLP}

Kuperberg, Lovett, and Peled~\cite{KLP12} developed
a powerful probabilistic method to prove the existence 
of certain regular combinatorial structures, such as {orthogonal arrays}, 
{combinatorial designs}, and $t$-wise permutations. In this section, 
we describe their main theorem.

Let $M$ be a $|B| \times |A|$ matrix with integer entries, 
where $A$ and $B$ are the set of columns and the set of rows 
of $M$, respectively. We think of the elements of $A$, 
respectively $B$, as vectors in $\Z^B$, respectively
in $\Z^A$. We are interested in those matrices $M$
that satisfy the five properties below.
\begin{itemize}
\item[\bf 1.]
{\bf Constant vector.} 
There exists a rational linear combination of the columns of $M$
that produces the vector $(1,1, \ldots ,1)^T$.

\item[\bf 2.]
{\bf Divisibility.} Let $\overline{b}$ 
denote the average of the rows of $M$, namely 
$
\overline{b}= \frac{1}{|B|} \sum_{b\in B} b
$.
There is an integer $c_1 \,{<}\, |B|$ such that 
the vector $c_{1} \overline{b}$ can be produced as
an integer linear combination of the rows of $M$. 
The smallest such $c_1$ is called the \textit{divisibility parameter}.

\item[\bf 3.]
{\bf Boundedness.} 
The absolute value of all the entries in $M$ is bounded
by an integer $c_2$, which is called the \textit{boundedness parameter}.

\item[\bf 4.]
{\bf Local decodability.} 
There exist a positive integer $m$ and an integer $c_3 \geq m$ 
such that, for every column $a\in A$, there is a vector
of coefficients 
$\gamma^{a}=(\gamma_1,\gamma_2\ldots,\gamma_{|B|}) \in \Z^B$ satisfying
$||\gamma^{a}||_1\leq c_3$ and 
$
\sum_{b\in B} \gamma_{b} b
 =  
m \eee_a
$, 
where $\eee_a \in \{0,1\}^A$ is the vector with $1$ 
in coordinate $a$~and $0$ in all other coordinates.
The parameter $c_3$ is called the \textit{local decodability parameter}.

\item[\bf 5.]
{\bf Symmetry.} \looseness=-1
A \textit{symmetry} of the matrix $M$ is a permutation of rows $\pi \in S_B$
for which there exists an invertible linear map $\ell:\Q^A\to
\Q^A$ such that applying the permutation on rows and the linear map on
columns does not change the matrix, namely $\ell(\pi(M))=M$. The group
of symmetries of $M$ is denoted by $Sym(M)$. It is required that this
group acts transitively on~$B$. That is, for all $b_1,b_2\in B$ there
exists a permutation $\pi \in Sym(M)$ satisfying $\pi(b_1)=b_2$.
\end{itemize}

The following theorem has been proved by Kuperberg, Lovett, and Peled
in \cite{KLP12}. In fact, the results of Theorem\,2.4 and Claim\,3.2 
of~\cite{KLP12} are 
more general than \Tref{thm:klp} below.
However, \Tref{thm:klp} will suffice for our purposes.

\begin{theorem}
\label{thm:klp}
Let $M$ be a $|B| \times |A|$ integer matrix satisfying the
five properties above. Let $N$ be an integer divisible by $c_1$ such that
\be{KLP-bound}
c |A|^{52/5}c_{1}(c_{2}c_3)^{12/5}\log \bigl(|A|c_2\bigr)^8
\,\leq N \,<|B|
\ee
where $c>0$ is a sufficiently large absolute constant. \looseness=-1
Then there exists a set of rows $T\subset B$ of size
$|T|=N$ such that the average of the rows in $T$ is
equal to the average of all the rows in $M$, namely 
\be{KLP}
\frac{1}{N}\sum\limits_{b\in T} b
\ = \
\frac{1}{|B|}\sum\limits_{b\in B} b
\ = \
\overline{b}
\ee
\end{theorem}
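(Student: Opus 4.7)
The plan is to prove this by a local central limit theorem (LCLT) argument: show that a uniformly random size-$N$ subset $T \subseteq B$ has row-sum exactly $N\overline{b}$ with strictly positive probability. Set $X \deff \sum_{b \in T} b \in \Z^A$. The heart of the matter is the lattice-point probability $\Pr[X = N\overline{b}]$, which I will attack via Fourier inversion on a suitable torus.

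The first step is to check that $N\overline{b}$ is a feasible value of $X$. The divisibility hypothesis, combined with $c_1 \mid N$, guarantees that $N\overline{b}$ lies in the integer lattice $\Lambda \subseteq \Z^A$ spanned by the rows of $M$, so the question is well-posed. By the symmetry hypothesis, $\text{Sym}(M)$ acts transitively on $B$, so all rows are exchangeable in the random selection and $\mathbb{E}[X] = N\overline{b}$; the target is exactly the mean. Transitivity also ensures that the covariance $\Sigma_N$ of $X$ has a very symmetric structure, which will be crucial for estimating its determinant.

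Next I would write, via Fourier inversion on a fundamental domain $\mathcal{F}$ of the dual lattice $\Lambda^*$,
$$\Pr\bigl[X = N\overline{b}\bigr] \ = \ \frac{1}{\mathrm{vol}(\mathcal{F})}\int_{\mathcal{F}} \mathbb{E}\Bigl[e^{2\pi i \langle \xi,\, X - N\overline{b}\rangle}\Bigr]\,d\xi.$$
The standard LCLT strategy is to split $\mathcal{F}$ into a small ball $\mathcal{B}$ around the origin and its complement. On $\mathcal{B}$, a second-order Taylor expansion of the characteristic function and the fact that $\mathbb{E}[X] = N\overline{b}$ yield a Gaussian approximation, so this part contributes a positive quantity of order roughly $1/\sqrt{\det \Sigma_N}$. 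The boundedness hypothesis (entries of $M$ bounded by $c_2$) and transitivity produce an upper bound on $\det \Sigma_N$ polynomial in $|A|$, $c_2$, and $N$, and thus a concrete lower bound on the Gaussian main term.

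The main obstacle — and where the quantitative lower bound on $N$ really comes from — will be controlling the integral over $\mathcal{F} \setminus \mathcal{B}$ and showing it is negligibly smaller than the Gaussian main term. This is precisely where local decodability enters. For each column $a$, the integer combination $\sum_b \gamma_b^a b = m\eee_a$ with $\|\gamma^a\|_1 \le c_3$ means that any $\xi$ whose inner products $\langle \xi, b\rangle$ are uniformly close to integers must have $\langle \xi, m\eee_a \rangle$ close to an integer, forcing $\xi$ itself to be close to $(1/m)\Z^A$, hence close to a point of $\Lambda^*$. Contrapositively, for $\xi \in \mathcal{F} \setminus \mathcal{B}$, a positive fraction of the rows $b$ satisfy $|\mathbb{E}[e^{2\pi i \langle \xi, b\rangle}] - 1|$ bounded below; raising to the $N$th power (after carefully handling the dependence induced by the size-exactly-$N$ constraint, e.g.\ by comparison with the independent $\mathrm{Bernoulli}(N/|B|)$ model) yields the exponential decay $e^{-\Omega(N/(c_3 c_2)^{O(1)})}$. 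Balancing this decay against the polynomial losses from the volume of $\mathcal{F} \setminus \mathcal{B}$ and the Gaussian denominator, while tracking how $|A|$, $c_1$, $c_2$, $c_3$ enter each estimate, should produce exactly the stated threshold $c|A|^{52/5}c_1(c_2c_3)^{12/5}\log(|A|c_2)^8 \le N$.
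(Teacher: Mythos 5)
This theorem is not proved in the paper at all: the authors import it verbatim from Kuperberg, Lovett, and Peled \cite{KLP12} (they state explicitly that it is a special case of Theorem~2.4 and Claim~3.2 of that work), so there is no in-paper proof to compare against. That said, your outline does describe the actual strategy of the KLP proof: a local central limit theorem established by Fourier inversion over the dual of the lattice spanned by the rows, with a Gaussian main term near the origin and a ``minor arc'' estimate driven by local decodability, with divisibility guaranteeing that $N\overline{b}$ is a feasible lattice point and symmetry giving control of the covariance structure. In that sense you have correctly identified the right approach.

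However, as written this is a proof plan rather than a proof, and two points deserve flagging. First, you never use the constant-vector property (Property~1), yet it is essential: it is what makes the independent $\mathrm{Bernoulli}(N/|B|)$ model work, because any $T$ with $\sum_{b\in T} b = N\overline{b}$ automatically has $|T|=N$ (apply the rational column combination producing $(1,\dots,1)^T$ to both sides). Without it, your fallback of conditioning a uniform size-$N$ subset and ``comparing'' to the independent model reintroduces exactly the dependence problem you are trying to avoid, and the comparison is not routine. Second, the entire quantitative content of the theorem --- the specific exponents $52/5$ and $12/5$, the lower bound on the Gaussian main term via a spectral lower bound (not just an upper bound) on the covariance restricted to the relevant lattice, and the uniform decay estimate on $\mathcal{F}\setminus\mathcal{B}$ --- is asserted to ``balance out'' rather than derived. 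These estimates occupy the bulk of \cite{KLP12} and are delicate; a referee could not accept ``should produce exactly the stated threshold'' as a substitute. For the purposes of this paper the correct move is simply to cite \cite{KLP12}, as the authors do.
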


\vspace{3ex}
\section{Proof of the main result}
\label{sec:main}

We will apply Theorem~\ref{thm:klp} to prove existence of designs over
finite fields. We first introduce~the~appropriate matrix $M$, which is
the incidence matrix of $t$-subspaces and $k$-subspaces.\pagebreak[3.99]

Let $M$ be a $|B|\times |A|$ matrix,
whose columns $A$ and rows $B$ correspond to
the $t$-subspaces and the $k$-subspaces of $\F_q^n$, 
respectively. Thus
$
|A| = \Gq{n}{t}
$
and
$ 
|B| = \Gq{n}{k}
$.
The entries of $M$ are defined by
$M_{b,a} = 1_{a \subset b}$. 
It is easy to see that a simple $t$-$(n,k,\lambda)$ design over $\F_q$ 
corresponds to a set of rows $b_{1},b_{2},\ldots,b_{N}$ of $M$ 
such that 
\be{design-def}
b_{1} + b_{2} + \cdots + b_{N}
\ = \
(\lambda,\lambda,\ldots,\lambda)
\hspace{5ex}\text{for some $\lambda \in \mathbb{N}$}
\ee
Note that this implies $\lambda \Gq{n}{t} = N \Gq{k}{t}$,
because each row $b \in B$ has Hamming weight $\Gq{k}{t}$.
In~order to relate \eq{design-def} to 
\Tref{thm:klp}, we need the following simple lemma.
The lemma is well known; we include a brief
proof for completeness.

\begin{lemma}
\label{lemma:I}
Let $V$ be a $t$-subspace of\/ $\F_q^n$. 
The number of $k$-subspaces $U$ 
such that\/ $V\subset U\subset \F_q^n$ is~given 
by\/ $\Gq{n-t}{k-t}$.
\end{lemma}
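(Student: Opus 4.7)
The plan is to use the quotient space $\F_q^n / V$ and exhibit a bijection between the $k$-subspaces $U$ with $V \subset U \subset \F_q^n$ and the $(k-t)$-subspaces of $\F_q^n / V$. Since $V$ has dimension $t$, the quotient has dimension $n-t$, so by the very definition of the Gaussian coefficient (and by $|A|$-type counting already appearing in the paper), the number of $(k-t)$-subspaces of $\F_q^n / V$ is $\Gq{n-t}{k-t}$. Thus the main content of the proof is to establish the bijection.

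First I would consider the natural quotient map $\pi : \F_q^n \to \F_q^n/V$. Given any $k$-subspace $U$ with $V \subseteq U$, its image $\pi(U)$ is a subspace of $\F_q^n/V$ of dimension $\dim U - \dim V = k-t$ by the first isomorphism theorem applied to $\pi|_U$. Conversely, given a $(k-t)$-subspace $W \subseteq \F_q^n/V$, the preimage $\pi^{-1}(W)$ is a subspace of $\F_q^n$ containing $V = \pi^{-1}(0)$, of dimension $(k-t) + t = k$. Checking that the two maps $U \mapsto \pi(U)$ and $W \mapsto \pi^{-1}(W)$ are inverses is standard (for $V \subseteq U$ we have $\pi^{-1}(\pi(U)) = U + V = U$, and $\pi(\pi^{-1}(W)) = W$ since $\pi$ is surjective), completing the bijection.

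Once this correspondence is in place, the count follows immediately: the number of $k$-subspaces containing $V$ equals the number of $(k-t)$-subspaces of an $(n-t)$-dimensional $\F_q$-vector space, which is $\Gq{n-t}{k-t}$. There is no real obstacle here; the only thing to be careful about is confirming that $\pi^{-1}$ preserves dimension correctly and that both maps are well-defined on the respective families of subspaces, but this is entirely routine linear algebra. Alternatively, if one prefers to avoid the quotient construction, one can count directly by extending a fixed basis of $V$ to an ordered basis of $U$: there are $\prod_{i=0}^{k-t-1}(q^n - q^{t+i})$ ways to pick the additional basis vectors, and each $k$-subspace $U \supseteq V$ is counted $\prod_{i=0}^{k-t-1}(q^k - q^{t+i})$ times, and the ratio simplifies to $\Gq{n-t}{k-t}$; but the quotient-space argument is cleaner and would be my preferred write-up.
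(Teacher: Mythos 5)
Your proposal is correct, but your preferred write-up takes a different route from the paper. The paper's proof is exactly the ``alternative'' you sketch in your last sentence: fix a basis of $V$, count the $\prod_{i=0}^{k-t-1}(q^n-q^{t+i})$ ways to extend it to an ordered basis of a $k$-subspace $U\supseteq V$, and divide by the overcount factor $\prod_{i=0}^{k-t-1}(q^k-q^{t+i})$; the only step the paper leaves implicit is the routine simplification of this ratio to $\Gq{n-t}{k-t}$ (pull out the powers $q^{t}$ from each factor). Your main argument instead passes to the quotient $\F_q^n/V$ and exhibits the standard order-preserving bijection $U\mapsto \pi(U)$, $W\mapsto\pi^{-1}(W)$ between $k$-subspaces containing $V$ and $(k-t)$-subspaces of the $(n-t)$-dimensional quotient, then invokes the fact (stated in the paper's preliminaries) that an $(n-t)$-dimensional space has $\Gq{n-t}{k-t}$ subspaces of dimension $k-t$. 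Both arguments are complete and essentially the same length; the quotient version avoids any product manipulation and makes the structural reason for the answer transparent, while the paper's direct count is self-contained in that it does not rely on already knowing the formula for the number of subspaces of a given dimension. Either is acceptable here.
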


\begin{proof}
Fix a basis $\{v_1,v_2, \ldots ,v_t\}$ for $V$.
We extend this basis to a basis $\{v_1,v_2, \ldots ,v_k\}$
for $U$. The number of ways to do so is
$(q^n-q^{t})(q^n-q^{t+1}) \cdots (q^n-q^{k-1})$. However, each
subspace $U$ that contains $V$ is counted 
$(q^k-q^{t})(q^k-q^{t+1}) \cdots (q^k-q^{k-1})$ 
times in the above expression.
\end{proof}

It follows from \Lref{lemma:I} that
\be{eq:avgb}
\overline{b}
\ = \
\frac{1}{|B|}\sum\limits_{b\in B} b
\ = \
\frac{\displaystyle\Gq{n-t}{k-t}}{\displaystyle\Gq{n}{k}} 
\,\bigl(1,1,\ldots,1\bigr)
\ = \
\frac{\displaystyle\Gq{k}{t}}{\displaystyle\Gq{n}{t}} 
\,\bigl(1,1,\ldots,1\bigr)
\ee
Therefore, a simple nontrivial $t$-$(n,k,\lambda)$ design over $\F_q$ 
is a set of $N < |B|$ rows of $M$ satisfying
$$
b_{1}+b_{2}+ \cdots +b_{N} \ = \ N \overline{b}
$$
But this is precisely the guarantee provided by
\Tref{thm:klp} in \eq{KLP}. 
Note that the corresponding value 
of $\lambda = N \Gq{k}{t}/\Gq{n}{t}$
would be generally quite large.

\vspace{2ex}
\subsection{Parameters for the KLP theorem}

Let us now verify that the matrix $M$ satisfies the five conditions 
in \Tref{thm:klp} and estimate the relevant parameters $c_1,c_2,c_3$ in 
\eq{KLP-bound}.

\paragraph{Constant vector} \hspace*{-3ex}
Each $k$-subspace contains exactly $\Gq{k}{t}$ 
$t$-subspaces, so the sum of all the columns of $M$ 
is $\Gq{k}{t}(1,\ldots,1)^T$. 
Hence $(1,1,\ldots,1)^T$ 
is a rational linear combination of the columns of $M$.

\paragraph{Symmetry} \hspace*{-3.5ex} 
An invertible linear transformation $L:\F_q^n\to \F_q^n$ acts
on the set of $k$-subspaces~by mapping $U = \Span{v_1,v_2,\ldots,v_k}$ to
$L(U)= \Span{L(v_1),L(v_2) \ldots, L (v_k)}$.
It acts on the set of $t$-sub\-spaces in the same way.
Note that if $U$ is a $k$-sub\-space and $V$ is a
$t$-subspace, then $V\subset U$ if and only if $L(V)\subset L(U)$. 
Now, let $\pi_L\in S_B$ be the permutation of rows of $M$ induced 
by~$L$,~and~let
$\sigma_L\in S_A$ be the permutation of columns of $M$ induced by
$L$. Then $\pi_L\bigl(\sigma_L(M)\bigr)=M$. Note that~$\sigma_L$ acts as an
invertible linear map on $\Q^A$ by permuting the coordinates.  Hence,
$\pi_L$ is a symmetry of~$M$. 
The corresponding symmetry
group is, in fact, the general linear group ${\rm GL}(n,q)$.
It is well known that ${\rm GL}(n,q)$ is transitive:
for any two $k$-subspaces $U_1,U_2$, we 
can find an invertible linear transformation $L$ such that
$L(U_1)=U_2$, which implies $\pi_L(b_1) = b_2$ for the corresponding rows.
\vspace{-1.0ex}

\paragraph{Boundedness} \hspace*{-3ex}
Since all entries of $M$ are either $0$ or $1$, we can set $c_2=1$.
\vspace{-1.0ex}

\paragraph{Local decodability} \hspace*{-3ex}
Let $m$ be a positive integer to be determined later. Fix a
$t$-subspace $V$ corresponding to a column of $M$. We wish to find a
short integer combination of rows of $M$ summing to $m \eee_V$. In
order to do so, we fix an arbitrary $(t+k)$-subspace $W$ that contains
$V$. As part of the short integer combination, we will only choose 
those rows that correspond to the $k$-subspaces contained in $W$. 
Moreover, the integer coefficient for a $k$-subspace $U \subset W$ will 
depend only on the dimension $j=\dim(U \cap V)$.
We denote this coefficient by $f_{k,t}(j)$. 

We need the following conditions to hold. 
First, by \Lref{lemma:I}, there are
$\Gq{k}{k-t}$ $k$-subspaces $U$ such that $V \subset U \subset W$.
Therefore, we need
\begin{align}
\label{eq:cond1}
f_{k,t}(t) \Gq{k}{k-t}
= \
m
\end{align}
Second, for any other $t$-subspace $V' \subset \F_q^n$, we need that
\begin{align}\label{eq:cond2a}
\sum_{V' \subset U \subset W} f_{k,t}\bigl(\dim(U \cap V)\bigr)
\ = \
0
\end{align}
where the sum is over all $k$-subspaces $U$ containing $V'$ and contained
in $W$. Note that we only need to consider those $t$-subspaces $V'$ that 
are contained in $W$. For all other $t$-subspaces, our integer combination 
of rows of $M$ produces zero by construction.

The following lemma counts the number of $k$-subspaces which contain
$V'$ and whose intersection with $V$ has a prescribed dimension. Its
proof is deferred to Section~\ref{sec:technical_lemmas}.

\begin{lemma}
\label{lemma:II}
Let $V_1,V_2$ be two distinct $t$-subspaces of\/ $\F_{q}^{n}$ 
such that\/ $\dim(V_1 \cap V_2)=l$ for some $l$ in $\{0,1, \ldots ,t-1\}$. 
The number of $k$-subspaces $U\subset \F_{q}^{n}$ such that
$V_1 \subset U$ and $\dim(U \cap V_2) = j$, for some
$j \in \{l,l+1,\ldots,t\}$,
is given by 
\be{lemma2}
q^{(k-t-j+l)(t-j)} \Gq{t-l}{j-l}\Gq{n-2t+l}{k-t-j+l} 
\ee
\end{lemma}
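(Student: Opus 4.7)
The plan is to count the desired $k$-subspaces $U$ in two stages: first by choosing the intersection $X := U\cap V_2$, and then by counting the $k$-subspaces $U$ consistent with this intersection and with $V_1\subset U$.

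\textbf{Stage 1 (choose $X$).} Since $V_1\subset U$, we have $V_1\cap V_2\subset U\cap V_2 = X$; and since $X\subset V_2$ has dimension $j$, it corresponds, after passing to the quotient $V_2/(V_1\cap V_2)$, to a $(j-l)$-dimensional subspace of a $(t-l)$-dimensional space. The number of choices for $X$ is therefore $\Gq{t-l}{j-l}$.

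\textbf{Stage 2 (count $U$ for fixed $X$).} Set $V' := V_1 + X$. By the modular law, $V_1\cap X = V_1\cap V_2$, so $\dim V' = t + (j-l)$. Any admissible $U$ must contain $V'$, so it is determined by its image $\bar U := U/V'$ in the quotient $\bar{\F}_q := \F_q^n/V'$, a subspace of dimension $k-t-j+l$ in a space of dimension $n-t-j+l$. I claim the condition $U\cap V_2 = X$ is equivalent to $\bar U\cap \bar V_2 = 0$, where $\bar V_2 := (V_2+V')/V'$. Indeed, again by the modular law $V_2\cap V' = X$, so $\dim \bar V_2 = t-j$; and the standard check (if $u\in U\cap(V_2+V')$ write $u = v_2 + v'$, use $V'\subset U$ to get $v_2\in U\cap V_2 = X\subset V'$) shows that $U\cap V_2 = X$ if and only if $U\cap(V_2+V') = V'$ if and only if $\bar U\cap \bar V_2 = 0$.

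\textbf{Combining.} It remains to count $m$-dimensional subspaces of an $N$-dimensional $\F_q$-vector space that meet a fixed $d$-dimensional subspace trivially, where $m = k-t-j+l$, $d = t-j$, and $N = n-t-j+l$, so that $N-d = n-2t+l$. Counting ordered bases of such subspaces in the usual way yields
\[
\prod_{i=0}^{m-1}\frac{q^N - q^{d+i}}{q^m - q^i}
\ =\ q^{md}\,\Gq{N-d}{m}
\ =\ q^{(k-t-j+l)(t-j)}\,\Gq{n-2t+l}{k-t-j+l}.
\]
Multiplying by the $\Gq{t-l}{j-l}$ choices of $X$ from Stage~1 gives the claimed formula~\eqref{lemma2}.

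The only delicate step is the Stage~2 equivalence: one must rule out the possibility that $U\cap V_2$ strictly contains $X$, which is precisely what the trivial-intersection condition in the quotient $\bar{\F}_q$ enforces. Everything else is a routine $q$-counting computation, and the ranges $l\le j\le t$ and nonnegativity of $k-t-j+l$ and $n-2t+l$ (ensured by taking $n,k$ large enough, as in the hypotheses of \Tref{thm:main}) guarantee that all $q$-binomial coefficients are well-defined.
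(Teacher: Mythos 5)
Your proof is correct and follows essentially the same two-stage decomposition as the paper: both arguments first choose the intermediate $(t+j-l)$-dimensional subspace (you parametrize it by $X=U\cap V_2$, the paper by $Z=U\cap(V_1+V_2)$; these are in bijection via $Z=V_1+X$), yielding the factor $\Gq{t-l}{j-l}$, and then count the extensions to a $k$-subspace $U$ meeting $Y=V_1+V_2$ in exactly that intermediate subspace, yielding $q^{(k-t-j+l)(t-j)}\Gq{n-2t+l}{k-t-j+l}$. Your passage to the quotient $\F_q^n/V'$ and the explicit verification that $U\cap V_2=X$ is equivalent to $\bar U\cap\bar V_2=0$ is a slightly cleaner packaging of the step the paper carries out by counting ordered bases avoiding $Y$, but it is the same computation.
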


With the help of \Lref{lemma:II} we
can rephrase~\eq{eq:cond2a} as the following set of $t$
linear equations:
\begin{align}\label{eq:cond2}
\sum_{j=l}^{t}\,f_{k,t}(j)\Gq{t-l}{t-j}\Gq{k-t+l}{j}q^{(k-t-j+l)(t-j)}
\ = \
0
\hspace{6ex}
\text{for~ $l = 0,1,\ldots,t-1$}
\end{align}
Equations \eqref{eq:cond1} and \eqref{eq:cond2} together form 
a set of $t+1$ linear equations, which can be represented 
in the form of a matrix production:
\begin{align}\label{eq:mform}
D f
\ = \
(0,0, \ldots ,0,m)^{T}
\end{align} 
where
$f=\bigl(f_{k,t}(0),f_{k,t}(1), \ldots ,f_{k,t}(t)\bigr)^T$
and
$D$ is an upper-triangular $(t{+}1) \times (t{+}1)$ matrix~with 
entries
\begin{align}
d_{l,j}
\ = \
\Gq{t-l}{t-j}\Gq{k-t+l}{j}q^{(k-t-j+l)(t-j)} 
\hspace{6ex}
\text{for~ $0 \le l \le j \le t$}
\end{align}
The condition $t\leq k$ ensures nonzero values on the main diagonal. 
Therefore, $\det D$ is nonzero and the system of linear equations 
is solvable. By Cramer's rule, we have
\begin{align}
f_{k,t}(j)
\ = \
\frac{\det D_j}{\det D}\: m
\end{align}
where $D_j$ is the matrix formed by replacing 
the $j$-th column of $D$ by the vector $(0,0,\ldots,1)^T$.
Note that $\det D$ is an integer. Thus we set
$m= \det D$, so that $f_{k,t}(j)= \det D_j$.
This guarantees that the coefficients
$f_{k,t}(0),f_{k,t}(1), \ldots ,f_{k,t}(t)$
are integers.

We are now in a position to establish a bound
on the local decodability parameter $c_3$. First,
the following lemma bounds the determinants of $D$ and $D_j$.
We defer its proof to Section~\ref{sec:technical_lemmas}.
\begin{lemma}
\label{lemma:det}$\,$\vspace{-3ex}
\begin{align*}
|\det D|~ &\leq ~q^{k(t+1)^2}
\\
|\det D_j|~ &\leq ~q^{k(t+1)^2}
\hspace{5ex}\text{for~ $j = 0,1,\ldots,t$}
\end{align*}
\end{lemma}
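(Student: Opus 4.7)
The plan is to first derive a uniform upper bound on the magnitude of each individual entry of $D$, and then leverage the matrix structure in two different ways: upper-triangularity for $\det D$, and Leibniz expansion for $\det D_j$ exploiting the fact that the replaced column is extremely sparse.

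First, I will bound $|d_{l,j}| = \Gq{t-l}{t-j}\Gq{k-t+l}{j}\,q^{(k-t-j+l)(t-j)}$ uniformly in $l,j$. Applying the upper bound from \Lref{lemma:bounds} to each $q$-binomial coefficient, and absorbing the binomial prefactors $\binom{t-l}{t-j}$ and $\binom{k-t+l}{j}$ into $q^{t-l}$ and $q^{k-t+l}$ respectively (using $2^n \leq q^n$ for $q\geq 2$), the exponent of $q$ in the resulting bound becomes
\[
(t-j)(j-l) + (t-l) + j(k-t+l-j) + (k-t+l) + (k-t-j+l)(t-j).
\]
The key cancellation is that the two terms carrying the factor $(t-j)$ combine via $(j-l)+(k-t-j+l) = k-t$ to give $(t-j)(k-t)$, and further simplification collapses the whole sum to $k(t+1) - t^2 + j(l-j)$. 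Since the nonzero entries of $D$ require $l \leq j$, the correction $j(l-j)$ is nonpositive, so I obtain the uniform bound $|d_{l,j}| \leq q^{k(t+1) - t^2}$.

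The bound on $\det D$ is then immediate: upper-triangularity gives $\det D = \prod_{l=0}^{t} d_{l,l}$, and multiplying $t+1$ copies of the entry bound yields $|\det D| \leq q^{(t+1)(k(t+1) - t^2)} = q^{k(t+1)^2 - t^2(t+1)} \leq q^{k(t+1)^2}$. For $\det D_j$, I apply Leibniz's formula: because the $j$-th column of $D_j$ equals $(0,\ldots,0,1)^T$, the only permutations contributing nonzero terms are those with $\sigma(t) = j$, of which there are $t!$ (corresponding to bijections $\{0,\ldots,t-1\}\to\{0,\ldots,t\}\setminus\{j\}$). Each such term equals $1 \cdot \prod_{l<t} d_{l,\sigma(l)}$ in absolute value and is therefore bounded by $q^{t(k(t+1)-t^2)} = q^{kt(t+1) - t^3}$, giving $|\det D_j| \leq t!\cdot q^{kt(t+1) - t^3}$. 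The remaining slack is $k(t+1)^2 - (kt(t+1) - t^3) = k(t+1) + t^3$, which easily absorbs $t!$, since $t! \leq t^{t} \leq q^{t^2} \leq q^{t^3}$ for $q \geq 2$ and $t \geq 1$.

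The main obstacle will be carrying out the entry-bound calculation cleanly: namely, verifying the nontrivial collapse of the five-term exponent sum into the clean expression $k(t+1) - t^2 + j(l-j)$ via the telescoping of the $(t-j)$-factors. Once that algebraic simplification is in place, both determinant bounds follow by routine estimation, and the small polynomial overheads from the binomial prefactors and from $t!$ fit comfortably into the $-t^2(t+1)$ and $-t^3$ slack terms that the simplification produces.
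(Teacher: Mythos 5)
Your proof is correct, but it takes a genuinely different route from the paper's. The paper proceeds via two auxiliary lemmas: it first shows, by a ratio computation $d_{l,j+1}/d_{l,j}$ (with a special case at $l=j=0$, $q=2$), that the diagonal entry essentially dominates each row, bounding $\prod_l \max_j d_{l,j}$ by $2^{k(t+1)+1}q^{(k-t)t(t+1)}$; it then counts the nonzero generalized diagonals of $D_j$, showing there are at most $2^t$ by exploiting the near-triangular support pattern of $D_j$, and multiplies the two bounds. You instead establish a single uniform entry bound $|d_{l,j}|\le q^{k(t+1)-t^2}$ directly from the estimate $\Gq{n}{k}\le\binom{n}{k}q^{k(n-k)}$ of \Lref{lemma:bounds}, using the exact cancellation $(j-l)+(k-t-j+l)=k-t$ to collapse the exponent to $k(t+1)-t^2+j(l-j)\le k(t+1)-t^2$ on the support $l\le j$ (I verified this algebra; it is correct). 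Triangularity then handles $\det D$, and for $\det D_j$ you replace the paper's sharp $2^t$ diagonal count by the crude bound of $t!$ permutations fixing $\sigma(t)=j$, which is absorbed by the slack $q^{k(t+1)+t^3}$ since $t!\le q^{t^3}$. Your argument is more elementary --- it dispenses with both auxiliary lemmas and the row-monotonicity analysis with its edge case --- at the cost of a weaker combinatorial factor ($t!$ versus $2^t$), which is immaterial here because both are dwarfed by the exponential slack in the target bound $q^{k(t+1)^2}$.
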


The number of $k$-subspaces $U$ contained in $W$ is $\Gq{k+t}{k}$.
We have multiplied the row of $M$ corresponding to each 
such subspace by a coefficient $f_{k,t}(j)$ which 
is bounded by $q^{k(t+1)^2}$. Hence
\begin{align}
c_3 
\,= \,
\max\bigl\{m,\|f\|_1\bigr\} 
\,\le\,
\Gq{k+t}{k} q^{k(t+1)^2} 
\le\, 
{k+t \choose k} q^{kt} q^{k(t+1)^2} 
\le\,
q^{2k(t+1)^2}
\end{align}

\paragraph{Divisibility} \hspace*{-3.0ex}
The proof of local decodability also makes it possible
to establish a bound~on~the~divisibility parameter $c_1$. 
We already know that for $m=\det D$,
we can represent any element in $m \Z^A$ as an integer 
combination of rows of $M$.
By~\eqref{eq:avgb}, we have 
$
\Gq{n}{t} \overline{b} 
=
\Gq{k}{t}(1,1,\ldots,1)
$. 
Hence, $m \Gq{n}{t} \overline{b} \in m \Z^A$ 
can be expressed as an integer combination
of rows of $M$. It follows that
\begin{align}
c_1 
\,\le\,
m\Gq{n}{t} 
\le \
q^{k(t+1)^2} {n\choose t}q^{t(n-t)} 
\,\le\
q^{k(t+1)^2+t(n-t)+n}
\end{align}

\vspace{2ex}
\subsection{Putting it all together}

We have proved that the incidence matrix $M$ satisfies the five conditions 
in \Tref{thm:klp}, and established the following bounds on the parameters:
\begin{align}
\label{c1}
&c_1 ~\le~ q^{k(t+1)^2+t(n-t)+n}  \\
&c_2 ~=~1\\
&c_3 ~\leq~ q^{2k(t+1)^2}
\end{align}
By \Lref{lemma:bounds}, we also have
\begin{align}
\label{|A|}
|A|=\Gq{n}{t} &\leq~ {n \choose t}q^{t(n-t)}\leq q^{t(n-t)+n}
\\
\label{|B|}
|B| = \Gq{n}{k} &\ge~ q^{k(n-k)}
\end{align}
Combining \eq{KLP-bound} with \eq{c1}\,--\,\eq{|A|},
we see that the lower bound on $N$ in Theorem~\ref{thm:klp} is at most
\be{final-bound}
c'|A|^{52/5}c_{1}(c_{2}c_3)^{12/5}\log (|A|c_2)^8 
\ \leq \ 
cq^{(57/5) \cdot (t+1)n+ckt^2} n^c
\ee
for some absolute constant $c>0$. If we fix $t$ and $k$,
while making $n$ large enough, then the right-hand side of \eq{final-bound}
is bounded by $c q^{12 (t+1) n}$. In view of \eq{|B|}, this is strictly less
than $|B|$ whenever $k>12(t+1)$ and $n$ is large enough.  
It now follows from \Tref{thm:klp} that
for large enough $n$, there exists a simple
$t$-$(n,k,\lambda)$-design over $\F_q$ of size $N \leq cq^{12n(t+1)}$. 
The reader can verify that this holds whenever 
$n \ge \tilde{c} kt$ for a large enough constant $\tilde{c}>0$.

\vspace{3ex}
\section{Proof of the technical lemmas}
\label{sec:technical_lemmas}

In this section, we prove the two technical lemmas (\Lref{lemma:II} 
and \Lref{lemma:det}) we have used to establish the local decodability 
property.

\subsection{Proof of Lemma~\ref{lemma:II}}
Let $V_1,V_2$ be two distinct $t$-subspaces of $\F_{q}^{n}$ 
with $\dim(V_1 \cap V_2)=l$.
Let $U$ be a $k$-subspace~of~$\F_{q}^{n}$ such that
$V_1 \subset U$ and $\dim(U \cap V_2) = j$.
Further, let $X=V_1\cap V_2$ and $Y=V_1+V_2$. 
It~is~not~difficult to show that the following holds:
\begin{align}
\dim(X) &=\,l  & \dim(Y) &=\,2t-l \nonumber \\
\dim(U\cap V_1) &=\,t & \dim(U\cap V_2) &=\,j   \\
\dim(U\cap X) &=\,l &\dim(U\cap Y) &=\, t+j-l &  \nonumber
\end{align}
We will proceed in three steps.
First, fix a basis $\{v_1,v_2, \ldots ,v_t\}$ for $V_1$. 
Next, we extend $V_1$ to the subspace $Z= U\cap Y$ which has 
an intersection of dimension $j$ with $V_2$. In order to do that, 
we pick $j-l$ vectors $v_{t+1},v_{t+2}, \ldots ,v_{t+j-l}$
from $Y\setminus V_1$, 
in such a way that
$v_1,v_2\ldots,v_{t+j-l}$ are linearly independent. 
The number of ways to do so is 
\be{Z-count}
N_1
\ =
\prod_{i=0}^{j-l-1}\Bigl(q^{2t-l}-q^{t+i}\Bigr)
\ee
However, each such subspace $Z$ is counted more than once 
in \eq{Z-count}, 
since there are many different ordered bases for $Z$.
The appropriate normalizing factor is 
$N_2=\prod_{i=0}^{j-l-1}\bigl(q^{t+j-l}-q^{t+i}\bigr)$.
Hence, the total number of different choices for $Z$ is
\begin{align}
\label{N1N2}
\frac{N_1}{N_2}
\ =
\prod_{i=0}^{j-l-1} \frac{q^{2t-l}-q^{t+i}}{q^{t+j-l}-q^{t+i}} 
\ = 
\prod_{i=0}^{j-l-1} \frac{q^{t-l}-q^{i}}{q^{j-l}-q^{i}} 
\ = \
\Gq{t-l}{j-l}
\end{align}
In order to to complete $U$, we need to extend $Z$ by $k-(t+j-l)$
linearly independent vectors 
chosen from $\F_q^n \setminus Y$.
The number of ways to do so is
$N_3=\prod_{i=0}^{k-(t+j-l)-1}\bigl(q^n-q^{(2t-l)+i}\bigr)$, with normalizing
factor \smash{$N_4=\prod_{i=0}^{k-(t+j-l)-1}\bigl(q^k-q^{(t+j-l)+i}\bigr)$}. We have
\begin{align}
\label{N3N4}
\frac{N_3}{N_4}~
&=\!\!\prod_{i=0}^{k-(t+j-l)-1}{q^{(2t-l)+i}\over q^{(t+j-l)+i}} 
\cdot {q^{n-(2t-l)-i}-1\over q^{k-(t+j-l)-i}-1}
\ = \
q^{(k-t-j+l)(t-j)} \Gq{n-2t+l}{k-(t+j-l)}
\end{align}
Combining \eq{N1N2} and \eq{N3N4}, the total number of different
choices for the desired subspace $U$ is given by \eq{lemma2},
as claimed.

\subsection{Proof of Lemma~\ref{lemma:det}}

Lemma~\ref{lemma:det} follows from the following two
lemmas. The first bounds the product of the largest~elements in each
row. The second bounds the number of nonzero generalized diagonals in
$D_j$ --- that is, the number of permutations $\pi \in S_{t+1}$ such that
$(D_j)_{i,\pi(i)} \ne 0$ for all $i \in \{0,1,\ldots,t\}$.

\begin{lemma}\label{lemma:max_element}
$$\prod_{l=0}^t \max_{j} d_{l,j} 
\ \le \
2^{k(t+1)+1} q^{(k-t)t(t+1)}$$
\end{lemma}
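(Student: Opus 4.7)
The plan is to bound $d_{l,j}$ by a quantity independent of $j$ using Lemma~\ref{lemma:bounds}, then multiply the resulting estimates over $l = 0, 1, \ldots, t$. Recall that for $0 \le l \le j \le t$,
$$d_{l,j} \,=\, \Gq{t-l}{t-j}\Gq{k-t+l}{j}\, q^{(k-t-j+l)(t-j)}.$$

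First, I would apply the upper bound of Lemma~\ref{lemma:bounds} to each Gaussian coefficient:
$$\Gq{t-l}{t-j} \,\le\, \binom{t-l}{t-j}\, q^{(t-j)(j-l)}, \qquad \Gq{k-t+l}{j} \,\le\, \binom{k-t+l}{j}\, q^{j(k-t+l-j)}.$$
Substituting these into $d_{l,j}$ yields a binomial prefactor times $q^{E(l,j)}$, where
$$E(l,j) \,=\, (t-j)(j-l) + j(k-t+l-j) + (k-t-j+l)(t-j).$$

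Next, I would simplify $E(l,j)$. Pulling $(t-j)$ out of the first and third summands gives $E(l,j) = (t-j)(k-t) + j(k-t+l-j) = t(k-t) + j(l-j)$. Since $l \le j$ (the matrix $D$ is upper triangular), the term $j(l-j)$ is nonpositive, and hence $E(l,j) \le t(k-t)$ uniformly in $j$. For the binomial prefactor, I would use the crude bounds $\binom{t-l}{t-j} \le 2^{t-l}$ and $\binom{k-t+l}{j} \le 2^{k-t+l}$, whose product is at most $2^k$, independent of both $l$ and $j$.

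Combining these estimates, $\max_j d_{l,j} \le 2^k\, q^{t(k-t)}$ for each $l$, so
$$\prod_{l=0}^{t} \max_j d_{l,j} \,\le\, 2^{k(t+1)}\, q^{(k-t)t(t+1)},$$
which already beats the claimed bound. The only step that is not completely mechanical is the algebraic cancellation producing $E(l,j) = t(k-t) + j(l-j)$; once that identity is in hand, everything else is a one-line consequence, so I do not anticipate a genuine obstacle.
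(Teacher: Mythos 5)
Your proof is correct, and it takes a genuinely different (and arguably cleaner) route than the paper's. The paper proves the lemma by computing the ratio $d_{l,j+1}/d_{l,j}$ to show that each row's maximum is attained on the diagonal --- except in row $l=0$, where $d_{0,1}$ may exceed $d_{0,0}$ by a factor of at most $2$ when $q=2$ --- and then bounds the diagonal product $\prod_j d_{j,j}$; this is where the extra factor $2^{1}$ in the stated bound comes from. You instead bound every entry $d_{l,j}$ uniformly: applying $\Gq{n}{k}\le\binom{n}{k}q^{k(n-k)}$ to both Gaussian coefficients, the total $q$-exponent collapses to $t(k-t)+j(l-j)\le t(k-t)$ since $l\le j$ on the support of the upper-triangular matrix $D$, and the binomial prefactors multiply to at most $2^{k}$. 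This gives $\max_j d_{l,j}\le 2^k q^{t(k-t)}$ row by row, hence $2^{k(t+1)}q^{(k-t)t(t+1)}$ overall, which is even a factor of $2$ better than the claimed bound and avoids both the ratio computation and the special-casing of the $(0,0)$ entry. The identity $E(l,j)=t(k-t)+j(l-j)$ checks out, so there is no gap.
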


\begin{proof}
We first argue that for $l \in \{1,2,\ldots,t\}$,
the largest element in row $l$ is $d_{l,l}$.
For $l=0$, the largest element in the row
is either $d_{0,0}$ or $d_{0,1}$.
To see that, we calculate\pagebreak[3.99]
\begin{align*}
{d_{l,j+1}\over d_{l,j}}
&=~{\displaystyle\Gq{t-l}{t-j-1} \over \displaystyle\Gq{t-l}{t-j}}
\cdot
{\displaystyle\Gq{k-t+l}{j+1}\over \displaystyle\Gq{k-t+l}{j}}
\cdot
q^{(k-t-j+l-1)(t-j-1)-(k-t-j+l)(t-j)}
\\
&=~{[t-j]_{q}![j-l]_{q}!\over [t-j-1]_{q}![j-l+1]_{q}!}
\cdot
{[j]_{q}![k-t+l-j]_{q}!\over [j+1]_{q}![k-t+l-j-1]_{q}!}
\cdot
q^{1-(t-j)-(k-t-j+l)}\\
&=~{q^{t-j}-1 \over q^{j-l+1}-1}
\cdot
{q^{k-t+l-j}-1 \over q^{j+1}-1}
\cdot
q^{1-(t-j)-(k-t-j+l)} \\
&=~{q^{t-j}-1\over q^{t-j}} {q^{k-t-j+l}-1\over q^{k-t-j+l}} {q\over (q^{j+1}-1)(q^{j-l+1}-1)}\\
&<~{q\over (q^{j+1}-1)(q^{j-l+1}-1)}
\end{align*}
Note that unless $j=l=0$, this implies that $d_{l,j+1} < d_{l,j}$. The
only remaining case is $d_{0,1}/d_{0,0} < q/(q-1)^2$. This ratio can
be at most $2$ for $q=2$, and is below $1$ for $q>2$. Hence
$$
\prod_{l=0}^t \max_{j} d_{l,j} 
\ \le \
2 \prod_{j=0}^t d_{j,j}
\vspace{-1.5ex}
$$
We next bound this product:
\begin{align*}
\prod_{j=0}^{t} d_{j,j}&=~\prod_{j=0}^{t}\Gq{k-t+j}{j}q^{(k-t)(t-j)} \le
\prod_{j=0}^{t}{k-t+j\choose j}q^{j(k-t)+(k-t)(t-j)} \le 2^{k(t+1)} q^{(k-t)t(t+1)}
\\[-6.00ex]
\end{align*}
\end{proof}

\vspace{.5ex}

\begin{lemma}\label{lemma:diagonals}
$D_j$ has at most $2^{t}$ nonzero generalized diagonals.
\end{lemma}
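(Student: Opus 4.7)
The plan is to exploit the near-triangular structure of $D_j$, which differs from the upper-triangular matrix $D$ only in column $j$. I will first pin down one entry of every nonzero generalized diagonal, and then run a greedy row-by-row count for the rest.

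The key observation is that column $j$ of $D_j$ consists of zeros except for a single $1$ in row $t$. Therefore, in any permutation $\pi \in S_{t+1}$ on $\{0,1,\ldots,t\}$ whose generalized diagonal is nonzero, we must have $\pi(t) = j$; otherwise some row $i \neq t$ would be mapped to column $j$, producing a zero factor. On every other row $i < t$, the relevant entry $(D_j)_{i,m} = d_{i,m}$ for $m \neq j$ is nonzero only when $m \ge i$, by upper-triangularity of $D$.

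It remains to count bijections $\pi \colon \{0,1,\ldots,t-1\} \to \{0,1,\ldots,t\}\setminus\{j\}$ with $\pi(i)\ge i$ for all $i$. I would process the indices in the order $i = t-1, t-2, \ldots, 0$. At step $i$, the admissible values for $\pi(i)$ lie in
$$
\bigl\{i, i+1, \ldots, t\bigr\} \,\setminus\, \bigl(\{j\} \cup \{\pi(i+1), \ldots, \pi(t-1)\}\bigr).
$$
The previously chosen values $\pi(i+1), \ldots, \pi(t-1)$ form $t-1-i$ distinct elements of $\{i+1, \ldots, t\}$, all different from $j$, so removing them from $\{i, i+1, \ldots, t\}$ leaves exactly $(t-i+1)-(t-1-i) = 2$ elements; further excluding $j$ leaves at most $2$.

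Thus there are at most $2$ choices at each of the $t$ steps, yielding at most $2^t$ completions, and hence at most $2^t$ nonzero generalized diagonals of $D_j$. There is no serious technical obstacle here; the whole argument is the single structural observation $\pi(t)=j$ followed by a greedy count that treats each remaining row independently.
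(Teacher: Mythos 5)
Your proof is correct and rests on the same two ideas as the paper's: the forced assignment $\pi(t)=j$ coming from the structure of column $j$, followed by a bound of at most $2$ admissible columns for each remaining row, giving $2^t$ overall. The paper packages the second step as an induction on a reduced Hessenberg-type submatrix (after also noting that $\pi(i)=i$ is forced for $i<j$), whereas you use a direct greedy count with a pigeonhole argument; this is a presentational difference only, and your version is, if anything, a little cleaner.
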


\begin{proof}
Let $\pi \in S_n$ be such that $(D_j)_{i,\pi(i)} \ne 0$ for all $i$. If $j>0$ then we must have $\pi(i)=i$ for all $i<j$, and $\pi(t)=j$. Letting $r=t-j$ this reduces to the following problem: let $R$ be an $r \times r$ matrix corresponding to rows $j,\ldots,t-1$ and columns $j+1,\ldots,t$ of $D_j$. This matrix has entries $r_{l,j} \ne 0$ only for $j \ge l-1$. We lemma that such matrices have at most $2^t$ nonzero generalized diagonals. We show this by induction on $r$. Let us index the rows and columns of $R$ by $1,\ldots,r$. To get a nonzero generalized diagonal we must have $\pi(r)=r-1$ or $\pi(r)=r$. In both cases, if we delete the $r$-th row and the $\pi(r)$-th column of $R$, one can verify that we get an $(r-1) \times (r-1)$ matrix of the same form (e.g. zero values in coordinates $(l,j)$ whenever $j<l-1$). The lemma now follows by induction.
\end{proof}

\begin{proof}[Proof of Lemma~\ref{lemma:det}]
The determinant of $D$ or $D_j$ is bounded by the number of nonzero generalized diagonals (which is $1$ for $D$, and at most $2^t$ for $D_j$), multiplied by the maximal value a product of choosing one element per row can take. Hence, it is bounded by
$$
\max\bigl\{|\det D|,|\det D_j|\bigr\}
\le 2^t \cdot 2^{k(t+1)+1} q^{(k-t)t(t+1)} \le q^{t+k(t+1)+1 + (k-t)t(t+1)} \le q^{k(t+1)^2}
\vspace*{-2ex}
$$
\end{proof}\vspace*{-3ex}\pagebreak[3.99]

\section*{Acknowledgment}\vspace{-.75ex}
We are grateful to Michael Braun and Alfred Wasserman
for helpful discussions regarding~the~history 
and the current state of knowledge about
$t$-designs over finite fields.

\vspace{3ex}

\end{document}